\documentclass{amsart}
\usepackage{graphicx}
\usepackage{amsthm}
\usepackage{amsmath}
\usepackage[utf8]{inputenc}
\usepackage{amsfonts} 
\usepackage[english]{babel}
\usepackage{mathrsfs}
\usepackage{hyperref}
\newtheorem{Theorem}{Theorem}[section]
\newtheorem*{Theorem*}{Theorem}
\newtheorem{Proposition}{Proposition}[section]
\newtheorem*{Proposition*}{Proposition}
\newtheorem{Lemma}{Lemma}[section]
\newtheorem{Definition}{Definition}[section]
 
\newtheorem{Remark}{Remark}[section]

\newtheorem{Example}{Example}

\newtheorem{Conjecture}{Conjecture}[Theorem]
\newtheorem*{Conjecture*}{Conjecture}

\usepackage{xcolor}

\title{Bottlenecking In Graphs and a Coarse Menger-Type Theorem}

\author{Michael Bruner}
\address{University of Montana, 32 Campus Drive
Missoula, MT 59812}
\email{michael@leebruner.com}

\author{Atish Mitra}
\address{Montana Technological University, 1300 West Park Street Butte, MT 59701}
\email{amitra@mtech.edu}

\author{Heidi Steiger}
\address{Montana Technological University, 1300 West Park Street Butte, MT 59701}
\email{hsteiger@mtech.edu}

\thanks{The authors would like to thank Calvin University for their hospitality during the Workshop in Geometric Topology 2024, where part of this manuscript was completed.}
\keywords{coarse graph theory, coarse geometry, coarse bottlenecking, graph minor, coarse skeleton, coarse Menger  theorem, coarse Erd\H{o}s-Menger conjecture}
\subjclass[2020]{51F30, 05C10}

\begin{document}
\begin{abstract}
    We expand upon the notion of bottlenecking introduced in \cite{BMS24}, characterizing a spectrum of graphs and showing that this spectrum naturally extends to a concept of coarse bottlenecking. Georgakopoulos and Papasoglu \cite{GP23}, as well as Albrechtsen, Huynh, Jacobs, Knappe, and Wollan \cite{STR23}, independently proposed a Coarse Menger Conjecture  which  was recently disproved by a counterexample \cite{NSS24}. We show how the notion of bottlenecking provides a different approach to coarsening  connectedness in a graph in the form of a Coarse Menger-type theorem. We also propose a  coarse Erd\H{o}s-Menger-type  Conjecture, in the spirit of the  Erd\H{o}s-Menger conjecture which was proved after decades by Aharoni and Berger \cite{AB09}.
\end{abstract}

\maketitle

\section{Introduction} 
Menger's Theorem \cite{Men27} in classical graph theory characterizes connectivity in graphs in a natural way. Roughly, it shows that the maximum number of disjoint paths between two sets is the minimum number of cuts required to intersect every path. One rigorous formulation of this theorem is that, for any graph $G$ with subsets $X$ and $Y$ the maximum number of internally disjoint $X,Y$ paths is equal to the minimum number of vertices in a set $S$ such that every $X,Y$ path must intersect $S$. A natural coarsening of this idea was recently attempted by two groups of researchers - Albrechtsen, Huynh, Jacobs, Knappe, and Wollan \cite{STR23} as well as Georgakopoulos and Papasoglu \cite{GP23}. They both attempted to propose a coarse version of Menger's Theorem using the natural coarse notions of disjoint paths and cuts. We restate their conjecture as: there exists an increasing function $f:\mathbb{N}\to\mathbb{N}$ such that for any graph $G$ with subgraphs $X$ and $Y$ the maximum number of $d$-disjoint $X,Y$ paths is equal to the minimum number of vertices in a set $S$ such that every $X,Y$ path must intersect the $f(d)$-neighborhood of $S$. However, recently a counterexample was discovered that shows this conjecture does not hold in the general case \cite{NSS24}. 

In our recent work \cite{BMS24} we introduced bottlenecking and its coarse counterpart as a measure of ``connectedness" in graphs. We use this notion to quantify when the techniques developed in that work will preserve a quasi-isometric equivalence. Bottlenecking in the classical graph theory setting is related to, but different from connectivity. Several results about bottlenecking in graphs follow as immediate consequences of Menger's Theorem and so our notion of coarse bottlenecking provides a different perspective on coarsening Menger's Theorem. Our formulation of a coarse Mengar conjecture differs from that previously proposed in ways that are natural when it is approached from the prospective of bottlenecking. We have the additional assumption that the sets $X$ and $Y$ must be individually connected and disjoint from each other. This conjecture framed in similar language as the conjectures of \cite{GP23} and \cite{STR23} is for $d=2l$, any graph $G$ the maximum number of $d$-disjoint $X,Y$ paths for any choice of connected subgraphs $X$ and $Y$ such that $d_G(X,Y)>d$ is equal to the minimum $n$ such that for any two connected subgraphs $X'$ and $Y'$ with $d_G(X',Y')>d$ there exists $S\subset V(G)$ with $|S|\leq n$ such that every $X',Y'$ path must intersect the $l$-neighborhood of $S$. 
In Section \ref{Section:CMC'} we give a proof of this conjecture for the class of coarsely bottlenecked graphs. 
\begin{Theorem*}\textbf{(Theorem \ref{CBN_FatMinorClosed})}
 A coarsely bottlenecked graph $G$ is $M$-fat $n$-bottlenecked if and only if $G$ does not contain a $2M$-fat $D_{n+1}$ minor. 
\end{Theorem*} 

In the non-coarse setting this follows as a direct result of Menger's Theorem. In the coarse setting the reverse direction of this is not hard to see. The maximum number of $2M$-disjoint paths between $2M$ disjoint sets is clearly a lower bound on the minimum number of vertices in a set $S$ such that all $X,Y$ paths intersect the $M$-neighborhood of $S$. The forward direction of this implication is more challenging. To give the forward implication we prove what we call a ``Menger-type" Theorem.

\begin{Theorem*}{\textbf{(Theorem \ref{Theorem:CMT}, A Coarse Menger-type Theorem)}}
       If a graph $G$ is coarsely $(n+1)$-bottlenecked but not coarsely $n$-bottlenecked, $G$ contains $D_{n+1}$ as an asymptotic minor.
\end{Theorem*}

In its original form Menger's Theorem was only a statement about finite graphs. Restricting the scope to finite graphs ensures that the maximum number of disjoint $X,Y$ paths, and the number of cuts required to separate $X$ and $Y$ are always finite. This restriction is similar to the assumption we make that our graph must be coarsely bottlenecked. This ensures that, at some scale, our analogous notions are well defined. In 2005 a long-standing conjecture of Erd\H{o}s was resolved by Aharoni and Berger \cite{AB09}. This shows that a natural extension of Menger's Theorem holds for infinite graphs. In hopes that our Coarse Menger-type Theorem may be extended in a similar spirit we propose a Coarse Erd\H{o}s-Menger-type Conjecture.

\begin{Conjecture*} {\textbf{(Conjecture \ref{CEMC} Coarse Erd\H{o}s-Menger-type Conjecture)}}

    If a graph $G$ is not coarsely bottlenecked, it must contain $D_n$ as an asymptotic minor for every $n\in \mathbb{N}$.

    Furthermore, for any cardinality $P$, if a graph is not coarsely $P$-bottlenecked then it contains an asymptotic $D_P$.   
\end{Conjecture*}

To make sense of our Coarse Erd\H{o}s-Menger-type conjecture and make progress towards it, we first must develop an understanding of $P$-bottlenecking for various infinite cardinalities $P$. One possible interpretation of this conjecture would be that, if for a given cardinality $P$, for all $M>0$ there exist connected $M$-disjoint sets $X$ and $Y$ so that for any $S\subset V(G)$ where every $X,Y$ path intersects the $M$-neighborhood of $S$, we have $\mathbf{card}(S)>P$. Then $G$ contains $D_P$ as an asymptotic minor. See \cite{A97} for a discussion on the challenges leading to a complete proof of the classical Erd\H{o}s-Menger Conjecture.

We also propose a conjecture about coarse bottlenecking similar to \cite{GP23} Conjecture 1.1 (reproduced in Conjecture \ref{GP1.1} of this paper). We show that if our Coarse Erd\H{o}s-Menger-type Conjecture holds, then this conjecture is equivalent to a special case of \cite{GP23} Conjecture 1.1.

\begin{Conjecture*}{
\textbf{(Conjecture \ref{Conjectuer:QnBN}
Quasi $n$-Bottlenecking Conjecture)}}

  If a graph $G$ is coarsely $n$-bottlenecked then it is quasi-isometric to a $n$-edge bottlenecked graph.
\end{Conjecture*}

This conjecture holds in the case of $n=1$ and $n=2$. In the case of $n=1$ this follows from Manning's characterization of quasi-trees \cite{M05}. In the case $n=2$ this follows from Fujiwara and Papasoglu's characterization of quasi-cacti \cite{FP23}.

These results motivate the definition of edge-bottlenecking in graph. Graphs that are $1$-edge bottlenecked are trees, and cacti are those that are $2$-edge bottlenecked. In classical graph theory, bottlenecking naturally defines a spectrum of graphs. The first few classes of this spectrum have been studied extensively and have proven to be of interest to a wide audience. In Section \ref{Section:nBN} we extend the existing geometric and excluded minor characterizations of $1$ and $2$-edge bottlenecked graphs to produce a geometric and excluded minor characterization for $3$-bottlenecked graphs. 
\begin{Proposition*}\textbf{(Proposition \ref{Proposition:CC})}
    A graph is $3$-edge bottlenecked iff the intersection of any two cycles is empty or connected.
\end{Proposition*}

This provides a natural view of $3$-edge bottlenecked graphs as a $2$-edge bottlenecked graph with a few edges (or "cuts") added across the cycles. We thus call such a graph a cut-cactus. We further extend the excluded minor characterization to the general case of $n$-edge bottlenecked graphs, showing that the class of $n$-edge bottlenecked graphs is minor closed.

\begin{Theorem*}\textbf{(Theorem \ref{BNMinorClosed})}
The following are equivalent:
\begin{enumerate}
  \item A graph $G$ is $n$-edge bottlenecked.
  \item Every minor of $G$ is $n$-edge bottlenecked.
  \item $G$ does not contain $D_{n+1}$ as a minor.
\end{enumerate}
\end{Theorem*}

\begin{figure}[h]

  \centering
  \resizebox{1.4in}{!}{\includegraphics{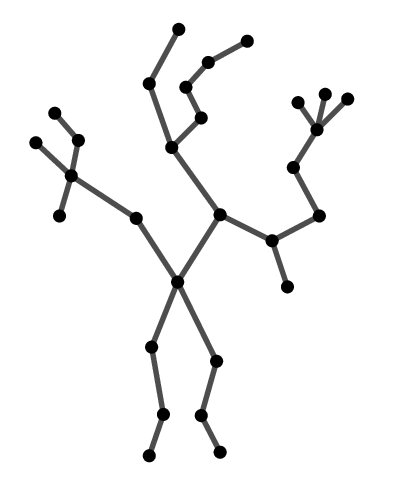}}
  \resizebox{1.4in}{!}{\includegraphics{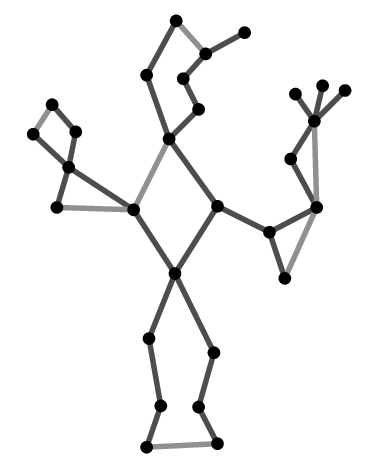}}
  \resizebox{1.4in}{!}{\includegraphics{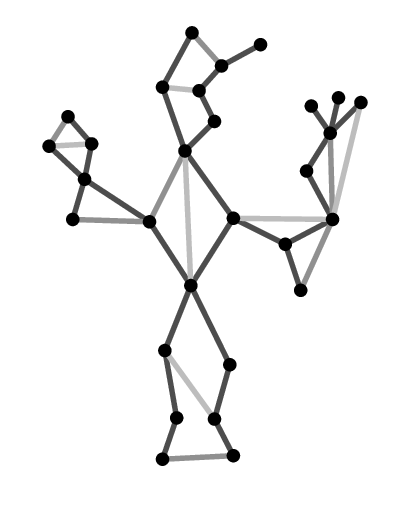}}  \caption{\label{Figure:Tree,Cactus,CutCactus} Examples of a tree, a cactus, and a cut-cactus. Each is produced from the last by the addition of a few edges. Note that while each of these graphs has a different bottlenecking number they are all 1-edge connected.}
\end{figure}

\section{Preliminaries} 
In this work, unless otherwise mentioned, we consider only connected, infinite and unbounded graphs. In this section we reproduce definitions for many common notions from graph theory and coarse geometry, for more information one may refer to a standard text \cite{BMGT,Roe}.
\subsection{Bottlenecking/Coarse Bottlenecking in Graphs}

\begin{Definition}\textbf{(Graph, Path, Cycle, $D_n$, Tree, Cactus, Cut-cactus)}
  A graph $G$ is defined to be two sets, a collection of vertices $V(G)$, and a collection of edges $E(G)$. Each edge is associated with two vertices, said to be its endpoints. A path is an ordered collection of edges that join a set of distinct vertices, a path between two sets of vertices $X$ and $Y$ is said to be an $X,Y$ path. A cycle is a pair of paths that share exactly their endpoints. We often may refer to trees, cactus and cut-cactus these are classes of graphs with restrictions on the interactions of cycles. Trees have no cycles, cactus allow cycles to only intersect at a single vertex, and cut-cactus allow cycles to only intersect along a path. We often refer to a dipole graph $D_n$, this is two vertices joined by $n$ multi-edges.
\end{Definition}
\begin{Definition}
\textbf{($n$-edge bottlenecking)}(See \cite{BMS24})\label{nEdgeB}
  A graph $G$ is said to have $n$-edge bottlenecking if for any two disjoint connected sets $X,Y\subset V(G)$, there exists a set $S\subseteq E(G)$ of size $n$ such that any $X,Y$ path contains an element of $S$.
\end{Definition}
\begin{Definition}\textbf{($M$-fat/Coarse $n$-bottlenecking)}(See \cite{BMS24})\label{mFnB}
  A graph $G$ is said to have $M$-fat $n$-bottlenecking for some $M,n\in\mathbb{N}$ if for any two connected, $M$-disjoint subgraphs $X,Y\subset G$, there exists a set $S\subset V(G)\setminus (V(X)\cup V(Y))$ of size $n$ such that every $X,Y$ path intersects $N_M(S):=\{x\in V(G)|d(x,S)<M\}$. If a graph is $M$-fat $n$-bottlenecked for some $M$ it is said to be coarsely $n$-bottlenecked. If there exists an $n$ such that a graph is coarsely $n$-bottlenecked, the graph is said to be coarsely bottlenecked.
\end{Definition}
\begin{Remark}
  Bottlenecking defines a natural spectrum of graphs as $n$-edge bottlenecking implies $(n+1)$-edge bottlenecking. This is also true for coarse bottlenecking.

\end{Remark}
\begin{Definition}\textbf{(n-ladder/$M$-fat ladder)}
  A graph is said to contain a $n$-ladder if there exist disjoint connected sets $X$ and $Y$ such that there are $n$ $X,Y$ paths that are disjoint outside of $X$ and $Y$. The sets $X$ and $Y$ are referred to as the poles, and the paths are referred to as the rungs of the ladder. If the poles are at least $M$-disjoint and the rungs are all pairwise $M$-disjoint a ladder is said to be a $M$-fat ladder.
\end{Definition}
\begin{Remark}
\label{Remark:BN=Ladder}
  A graph is $n$-edge bottlenecked iff it contains no $(n+1)$-ladders.

  This result follows from Menger's Theorem \cite{Men27} (see Remark \ref{MengerTheorem}).
\end{Remark}

\begin{Definition}\textbf{(Minor, $M$-fat/Asymptotic Minor)} (See \cite{GP23,CDNRV2010})
  A graph $G$ is said to contain a graph $H$ as a minor if there exist a subgraph of $G$ that is the union of disjoint sets corresponding to each of the vertices and edges of $H$, such that contracting each set produces the graph $H$. If these sets are $M$-disjoint except where they are incident in $H$ then this minor is said to be a $M$-fat minor. If a graph contains a $M$-fat $H$ minor for any $M\in\mathbb{N}$ then that graph is said to contain $H$ as an asymptotic minor.
\end{Definition}
\begin{Remark}
    A $n$-ladder is a $D_n$ minor, a $M$-fat $n$-ladder is a $M$-fat $D_n$ minor. If $K$ is a minor of $H$, and $H$ is a $M$-fat minor of $G$, then $K$ is a $M$-fat minor of $G$.
\end{Remark}

\subsection{Other Measures of Connectedness}
\label{Section:ConnectivityVsBN}
\begin{Remark}
  Bottlenecking is closely related to an existing and well understood notion ``Connectivity". Informally bottlenecking measures how connected a graph is by finding the two sets with the highest number of paths between them. Whereas ``Connectivity" measures how connected a graph is by finding the two points with the fewest number of paths between them. 
\end{Remark}
\begin{Definition}\textbf{($n$-Edge Connectivity)}
  A graph $G$ is said to have $n$-edge connectivity iff for any two vertices $x,y\in V(G)$ there exists a set of at least $n$ number of $x,y$ paths that share no edges.
\end{Definition}
\begin{Proposition}\label{Proposition:ConnectivityImpliesNotBottlenecked}
  If a graph is $n$-edge connected then it is not $(n-1)$-edge bottlenecked.
\end{Proposition}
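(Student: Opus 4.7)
The strategy is to use singleton witness sets in Definition \ref{nEdgeB} and invoke the $n$ edge-disjoint paths guaranteed by $n$-edge connectivity to defeat any proposed bottleneck of size $n-1$.

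First I would fix any two distinct vertices $x, y \in V(G)$ and take $X = \{x\}$, $Y = \{y\}$. These singletons are disjoint and trivially connected subsets of $V(G)$, hence legitimate choices in Definition \ref{nEdgeB}. By the hypothesis of $n$-edge connectivity, there exist $n$ pairwise edge-disjoint $x,y$ paths $P_1, \dots, P_n$. Given any candidate bottleneck $S \subseteq E(G)$ with $|S| = n-1$, edge-disjointness forces each edge of $S$ to lie in at most one $P_i$; so $S$ meets at most $n-1$ of the paths, leaving at least one $P_i$ which is an $X,Y$ path containing no element of $S$. This exhibits a pair $X,Y$ for which the $(n-1)$-edge bottlenecking condition of Definition \ref{nEdgeB} fails, so $G$ is not $(n-1)$-edge bottlenecked.

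The argument is essentially a one-line specialization of the easy direction of Menger's edge theorem (packaged through the bottlenecking definition), so there is no substantive obstacle. The only point worth double-checking is that Definition \ref{nEdgeB} admits singleton sets $\{x\}, \{y\}$ as connected witnesses, which holds by convention. One could alternatively derive the proposition from the contrapositive together with Remark \ref{Remark:BN=Ladder}, since $n$-edge connectivity immediately supplies an $n$-ladder with poles $\{x\}, \{y\}$, but the direct argument above is the most economical.
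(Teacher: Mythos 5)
Your proposal is correct and matches the paper's own (very terse) proof: both take $X=\{x\}$, $Y=\{y\}$ and use the $n$ edge-disjoint paths from $n$-edge connectivity to defeat any $(n-1)$-edge set. You simply spell out the counting step that the paper leaves implicit.
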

\begin{proof}
  If a graph is $n$-edge connected then any two vertices have at least $n$ edge disjoint paths between them so there is a pair of two vertices $x,y$ such that no set of $n-1$ edges intersects every $x,y$ path.
\end{proof}

\begin{Remark}
  The converse of Proposition \ref{Proposition:ConnectivityImpliesNotBottlenecked}
  does not hold showing that connectivity is a stronger property than bottlenecking (see Figure \ref{Figure:Tree,Cactus,CutCactus}).
\end{Remark}

\begin{Remark}
  Similarly to edge bottlenecking and edge connectivity there exist notions of point bottlenecking and point connectivity.
\end{Remark}

\begin{Definition}\textbf{($n$-point bottlenecking)}\label{nPointB}
  A graph $G$ is said to have $n$-point bottlenecking if for any two disjoint connected sets $X,Y\subset V(G)$, there exists a set $S\subseteq V(G)$ of size $n$ such that any $X,Y$ path contains an element of $S$.
\end{Definition}

\begin{Definition}\textbf{($n$-point Connectivity)}
  A graph $G$ is said to have $n$-point connectivity iff for any two vertices $x,y\in V(G)$ there exists a set of at least $n$ internally disjoint $x,y$ paths.
\end{Definition}

\begin{Proposition}
  If a graph is quasi-isometric to a $n$-edge bottlenecked graph then it is quasi-isometric to a $n$-point bottlenecked graph.
\end{Proposition}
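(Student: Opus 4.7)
The plan is to prove the stronger statement that every $n$-edge bottlenecked graph is already $n$-point bottlenecked; the proposition then follows trivially, since if $G$ is quasi-isometric to an $n$-edge bottlenecked graph $H$, then $H$ itself serves as an $n$-point bottlenecked graph quasi-isometric to $G$.

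To establish the key implication, I would fix arbitrary disjoint connected $X, Y \subseteq V(H)$ and appeal to Menger's theorem in both its edge and vertex forms. The $n$-edge bottlenecking hypothesis, together with the edge form of Menger's theorem, bounds the maximum number of pairwise edge-disjoint $X, Y$ paths by $n$. Since any family of pairwise vertex-disjoint $X, Y$ paths is in particular edge-disjoint, the same upper bound $n$ applies to vertex-disjoint families; applying the vertex form of Menger's theorem (in the standard augmented graph obtained by joining a source $s$ to $X$ and a sink $t$ to $Y$) then supplies a set $S$ of at most $n$ vertices meeting every $X, Y$ path, which may be padded arbitrarily to size exactly $n$. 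Equivalently, Remark \ref{Remark:BN=Ladder} tells us $H$ contains no $(n+1)$-ladder in the edge sense, and since vertex-disjointness of paths implies edge-disjointness, no analogous $(n+1)$-ladder in the point sense can exist either.

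The only point requiring real care --- more a matter of definition-chasing than of genuine obstruction --- is reconciling the paper's bottlenecking convention (a set $S$ merely meeting every $X, Y$ path) with the classical Menger formulation (a separator disjoint from $X \cup Y$). Definition \ref{nPointB} imposes no disjointness requirement on $S$, so the dual quantity is the maximum number of \emph{fully} pairwise vertex-disjoint $X, Y$ paths, obtained via the augmented-graph reduction above. Once this identification is made, the entire proof reduces to the elementary observation that vertex-disjointness of paths implies edge-disjointness, and hence $n$-edge bottlenecking implies $n$-point bottlenecking outright.
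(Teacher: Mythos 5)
Your proof is correct, and it takes a genuinely different route from the paper. The paper's proof is constructive: given the $n$-edge bottlenecked graph $H$, it subdivides every edge of $H$ once to produce a graph $H'$ that is simultaneously quasi-isometric to $H$ (hence to $G$) and $n$-point bottlenecked, with the subdivision vertices now available to play the role that edges played in the separator. You instead prove the sharper statement that $H$ itself is already $n$-point bottlenecked, avoiding any auxiliary construction: fully vertex-disjoint $X,Y$-paths are in particular pairwise edge-disjoint, so the maximum size of a vertex-disjoint family is bounded above by the maximum size of an edge-disjoint family, which is at most $n$ by $n$-edge bottlenecking and the edge form of Menger; the vertex form of Menger for sets (with no disjointness condition on the separator, matching Definition \ref{nPointB} exactly, as you note) then yields a vertex set of size at most $n$ meeting every $X,Y$-path, which is padded to size $n$ since $G$ is infinite. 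What your approach buys is a cleaner and strictly stronger conclusion, and it makes explicit the Menger machinery that the paper's one-line proof leaves implicit (the paper does not actually verify that the subdivided graph is $n$-point bottlenecked for arbitrary disjoint connected subsets of $V(H')$, which include subdivision vertices). What the paper's approach buys is that it does not invoke Menger's theorem directly, only the quasi-isometry invariance built into the proposition's statement. One small point worth flagging in your write-up: the paper works with infinite graphs, so you should note that the finite-bound form of Menger's theorem used here (``at most $k$ pairwise disjoint paths implies a separator of size at most $k$'' for finite $k$) does hold in infinite graphs, so the infinite setting is not an obstruction.
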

\begin{proof}
  If a graph is $n$-edge bottlenecked then by subdividing every edge once a graph with $n$-point bottlenecking may be produced.
\end{proof}
\begin{Remark}
  It is not clear if there are $n$-point bottlenecked graphs that are not quasi-isometric to a edge bottlenecked graph. However both notions coarsen in the same way.
\end{Remark}

\begin{Remark}\textbf{({Menger's Theorem})}
\label{MengerTheorem}
  The edge (or vertex) connectivity of a graph is equal to the minimum number of edges (or vertices) that must be removed from a graph to disconnect it.
\end{Remark}
\subsection{Searching for a coarse Menger-type theorem}

In our attempts to coarsen Menger's theorem, we analyse the counterexample in its originally proposed coarse version to see where the conjecture fails.

\begin{Example}\textbf{(Nguyen,  Scott \& Seymour, \cite{NSS24})}
\label{Example: counterexample}
    The  counterexample \cite{NSS24} to the coarse Menger's conjecture proposed in \cite{STR23} and \cite{GP23} consists of a $(2l+2)$-deep binary tree where the outermost branches are $2l$ long and connected at their ends with a similarly long path. They identify two overlapping sets $X,Y$ where there are at most two $3$-disjoint paths but no set of two vertices has a $l$-neighborhood that intersects every $X,Y$ path. Figure \ref{Figure:Counterexample} shows this construction. 
    
    This counterexample relies on the result in their construction that for any two $X,Y$ paths, either one pass through the root $r$ or they are at most $2$-disjoint. This result stems from the fact that the root is a single point along with the way the leaves overlap in the middle of the connecting path: All $X,Y$ paths must use $r,q_1$ or $q_2$ as shown in the Figure \ref{Figure:Counterexample}. For this counterexample to work, both sets $X$ and $Y$ must contain the root. Enforcing coarse disjointedness on these sets breaks the counterexample in such a way that it appears difficult to adapt into a similar counterexample. Based on this analysis, we present in section \ref{Section:CMC'} a coarse Menger-type Theorem \ref{Theorem:CMT} with a disjointedness requirement among some other restrictions that follow from coarse bottlenecking.
\end{Example}

\begin{figure}
    \centering
    \resizebox{5in}{!}{\includegraphics[width=0.5\linewidth]{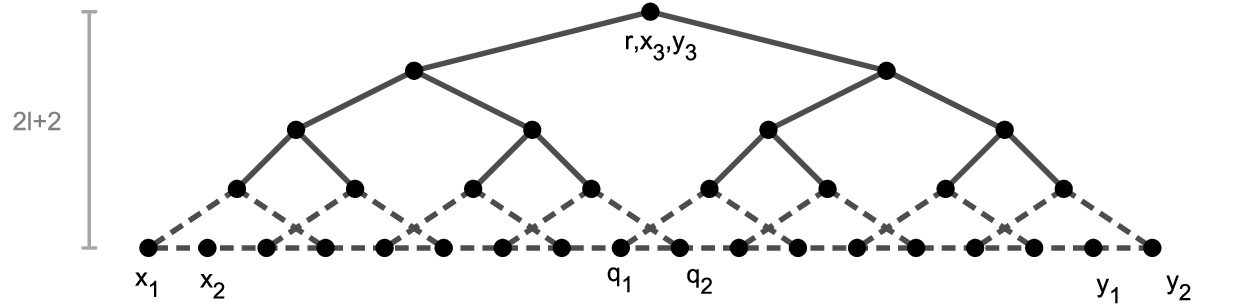}}
    \caption{Construction of the counterexample (reproduced from \cite{NSS24}). A binary tree of depth $2l+2$ from root point $r$. The dashed lines represent paths of length $2l$. Any path from $x_1,x_2,x_3 \in X$ to $y_1,y_2,y_3 \in Y$ must either be $r$ or contain point $q_1$ or $q_2$, so any pair of $X,Y$-paths that do not contain $r$ are at most $2$-disjoint. Note that for the counter example to work $X$ and $Y$ must both contain the root point $r$, and hence cannot be disjoint.}
    \label{Figure:Counterexample}
\end{figure}

\section{Characterizations of $n$-edge bottlenecked graphs} 
\label{Section:nBN}
\begin{Remark}
  The class of connected graphs that are $1$-edge bottlenecked has been studied extensively under the title ``Trees" and has proven to be of interest in many different fields. A characterization similar to what one might find in any textbook on graph theory is given here with the added equivalence to $1$-edge bottlenecking. 
\end{Remark}
\subsection{Trees}

\begin{Proposition}
  The following are equivalent
  \begin{enumerate}
    \item A graph is $1$-edge bottlenecked.
    \item A graph does not contain $D_2$ as a minor.  \item A graph has a unique path between any two vertices.
    \item A graph contains no cycles.
  \end{enumerate}
\end{Proposition}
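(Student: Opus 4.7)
The plan is to prove the equivalences along the cycle (1) $\Leftrightarrow$ (2) $\Leftrightarrow$ (4) $\Leftrightarrow$ (3), so that each implication is either a direct appeal to a result already stated in the excerpt or a short unwinding of the definitions. The link (1) $\Leftrightarrow$ (2) is essentially free: Remark \ref{Remark:BN=Ladder} tells us that $G$ is $1$-edge bottlenecked iff it contains no $2$-ladder, and the paper has already identified a $2$-ladder with a $D_2$ minor, so (1) and (2) express the same condition.

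For (2) $\Leftrightarrow$ (4), I would argue geometrically. Given a cycle $C \subset G$, pick two distinct vertices $u,v \in V(C)$; the singletons $\{u\}, \{v\}$ serve as the vertex branch sets of a $D_2$ minor, and the two $u,v$-arcs of $C$ supply the internally disjoint paths required. Conversely, given a $D_2$ minor with vertex branch sets $X,Y$ and two paths $P_1, P_2$ between them that are disjoint outside $X$ and $Y$, one closes them into a cycle by connecting the endpoints of $P_1, P_2$ inside $X$ (possible since $X$ is connected) and similarly inside $Y$.

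The equivalence (3) $\Leftrightarrow$ (4) is the classical symmetric-difference argument. A cycle clearly yields two distinct paths between any two of its vertices, so (3) $\Rightarrow$ (4). Conversely, if two distinct $u,v$-paths $P_1, P_2$ exist, then $P_1 \triangle P_2$ is a nonempty subgraph in which every vertex has even degree, and any such subgraph contains a cycle.

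The main obstacle is just bookkeeping: making sure the minor, ladder, cycle, and path correspondences line up under the paper's definitions—in particular, that the edge branch sets of a $D_2$ minor yield genuine internally disjoint paths that can be closed up through the connected vertex branch sets. Once Remark \ref{Remark:BN=Ladder} is in hand, no further graph-theoretic work is needed beyond these standard identifications.
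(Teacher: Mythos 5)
Your proof is correct, but it takes a genuinely different route from the paper's. The paper proves the statements in a single cycle of one-directional implications $1\Rightarrow2\Rightarrow3\Rightarrow4\Rightarrow1$, with each step a short direct contradiction argument. You instead break the statement into three independent biconditionals $(1)\Leftrightarrow(2)$, $(2)\Leftrightarrow(4)$, and $(4)\Leftrightarrow(3)$, outsourcing $(1)\Leftrightarrow(2)$ to Remark~\ref{Remark:BN=Ladder} (which rests on Menger's theorem) together with the ladder--$D_n$ identification, and outsourcing $(4)\Leftrightarrow(3)$ to the classical even-degree symmetric-difference argument. The paper's route has the advantage of being self-contained at the level of definitions (no appeal to Menger), and its $3\Rightarrow4$ is a one-line observation; your route is more modular and leans on standard machinery, which arguably makes the logical dependencies more transparent. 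One small caveat: the paper's remark after the ladder definition only records that an $n$-ladder \emph{is} a $D_n$ minor, not the converse, so to conclude that ``no $2$-ladder'' and ``no $D_2$ minor'' coincide you still need the converse direction (a $D_2$ minor yields two disjoint $X,Y$-paths between disjoint connected branch sets, hence a $2$-ladder, or directly a violation of $1$-edge bottlenecking as the paper itself does in its $1\Rightarrow2$ step). You flag this as ``bookkeeping,'' which is fair, but it is an extra argument beyond the two cited remarks rather than a free consequence of them.
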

\begin{proof}[$1\implies 2$]

Assume to a contradiction a graph contains a $D_2$ minor, then the sets that contract to form the vertices of $D_2$ give sets $X$ and $Y$ such that there is no edge intersecting every $X,Y$ path, contradicting condition $1$. 
\end{proof}
\begin{proof}[$2\implies 3$]
Assume to a contradiction that there were two distinct paths between two vertices, then there is an edge in one path but not the other, by contracting the whole graph to the endpoints of this edge a $D_2$ minor is produced, contradicting condition $2$.
\end{proof}
\begin{proof}[$3\implies 4$]
If a graph has a unique path between any two vertices then there is no pair of internally disjoint paths that share their endpoints.
\end{proof}
\begin{proof}[$4\implies 1$]
 Assume to a contradiction that a graph is not $1$-edge bottlenecked, this gives that there are connected sets $X$ and $Y$ such that there is no edge that intersects every $X,Y$ path. There must be at least two $X,Y$ paths, and there must be at least one edge contained in only one of these two paths. This edge is a path between its endpoints, and as there is an $X,Y$ path not through this edge there must be a second path between its endpoints, this gives a cycle and contradicts condition $4$.
\end{proof}

\subsection{Cacti}
\begin{Remark}
The class of graphs that are $2$-edge bottlenecked is known to many as "Cactus" graphs. These have been studied to some extent as well, but not to the same level as trees.
\end{Remark}
\begin{Proposition}
\label{Proposition:Cactus}
  The following are equivalent
  \begin{enumerate}
    \item A graph is $2$-edge bottlenecked.
    \item A graph does not contain $D_3$ as a minor.
    \item The intersection of any two distinct cycles is at most a single vertex.
  \end{enumerate}
\end{Proposition}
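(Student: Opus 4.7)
My plan is to prove the three implications cyclically in the order $1\Rightarrow 2\Rightarrow 3\Rightarrow 1$, mirroring the structure used for trees.

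For $1\Rightarrow 2$ I would argue by contradiction. If $G$ contains $D_3$ as a minor, the branch sets $X$ and $Y$ corresponding to the two vertices of $D_3$ are connected and disjoint, and the three edge branch sets contract to give three $X,Y$-paths that are pairwise disjoint outside of $X\cup Y$. Any set of two edges can meet at most two of these three rungs, so the remaining rung furnishes an $X,Y$-path missing both edges, contradicting $2$-edge bottlenecking.

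For $2\Rightarrow 3$ I would prove the contrapositive. Suppose distinct cycles $C_1,C_2$ share at least two vertices. The intersection $C_1\cap C_2$ has maximum degree $2$ in each cycle, and it cannot contain a cycle as a connected component (otherwise $C_1=C_2$), so it decomposes as a disjoint union of paths (possibly single vertices). I would select two vertices $u,v$ which are endpoints of a single maximal path $P$ of the intersection, or, if the intersection is discrete, two isolated vertices of it chosen to lie on a common arc of $C_1$. Then the arc of $C_1$ from $u$ to $v$ through $P$ (or the trivial arc in the discrete case), the other arc of $C_1$ between $u$ and $v$, and the corresponding arc of $C_2$ supply three internally disjoint $u,v$-paths, i.e.\ a subdivision of $D_3$, and hence a $D_3$ minor. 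The main obstacle, and where I expect the most care is required, is verifying these three arcs really are mutually internally disjoint; when the intersection has several components one must argue that the chosen arcs of $C_1$ and $C_2$ do not accidentally traverse other intersection components.

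For $3\Rightarrow 1$ I would again argue by contrapositive. If $G$ is not $2$-edge bottlenecked then by Remark \ref{Remark:BN=Ladder} it contains a $3$-ladder with connected poles $X,Y$ and rungs $R_1,R_2,R_3$ that are disjoint outside $X\cup Y$. Connectedness of $X$ and $Y$ yields paths $\alpha\subseteq X$ and $\beta\subseteq Y$ joining the $X$-endpoints and the $Y$-endpoints of $R_1$ and $R_2$; the closed walk $R_1\cup\alpha\cup R_2\cup\beta$ then contains a cycle $C_{12}$ which uses the entire rung $R_1$. An analogous construction for the pair $R_1,R_3$ yields a distinct cycle $C_{13}$ also containing $R_1$. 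Since both cycles contain the two endpoints of $R_1$, their intersection has at least two vertices, violating condition $3$.
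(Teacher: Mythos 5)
Your cyclic structure $1\Rightarrow 2\Rightarrow 3\Rightarrow 1$ matches the paper, and your arguments for $1\Rightarrow 2$ and $3\Rightarrow 1$ are essentially the paper's: the $D_3$ branch sets give a $3$-ladder, and conversely a $3$-ladder yields two distinct cycles both containing the full rung $R_1$ (the paper phrases this as ``two pairs of paths give two cycles that share an edge'').

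The place you diverge is $2\Rightarrow 3$. The paper only proves the case where $C_1\cap C_2$ is connected (a path with at least one edge), and explicitly punts the disconnected case to Proposition~\ref{Proposition:CC}: a disconnected intersection of cycles already forces a $D_4$ minor, hence a $D_3$ minor. You instead try to treat both cases uniformly, and you are right that this is where the care is needed --- your current choice of arcs does not work in the disconnected case. The difficulty is precisely the one you flag: if you take $u,v$ to be the endpoints of one component $P$ of $C_1\cap C_2$, then the ``other arc of $C_1$'' and the ``corresponding arc of $C_2$'' both pass through the remaining components of the intersection, so they are not internally disjoint. Also, ``the trivial arc in the discrete case'' is not defined when $u$ and $v$ are nonadjacent isolated intersection vertices. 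A clean repair that avoids appealing to the later Proposition~\ref{Proposition:CC}: since $C_1\ne C_2$, there is an arc $Q$ of $C_1$ with distinct endpoints $u,v\in V(C_2)$ and no internal vertex or edge in $C_2$ (take a maximal subpath of $C_1$ using only edges outside $C_2$; because $|V(C_1)\cap V(C_2)|\ge 2$, at least one such subpath has distinct endpoints). Then $Q$ together with the two $u$--$v$ arcs of $C_2$ gives a $D_3$ subdivision: $Q$ is internally disjoint from $C_2$ by construction, and the two arcs of $C_2$ are internally disjoint from each other. This handles connected and disconnected intersections in one stroke and is arguably cleaner than the paper's forward reference; as written, though, your $2\Rightarrow 3$ has a genuine gap.
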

\begin{proof}[$1 \implies 2$]
  If a graph contains a $D_3$ minor then each of the sets that contract to form the edges of $D_3$ give 3 disjoint paths between the sets that contract to form the vertices of $D_3$.
\end{proof}
\begin{proof}[$2 \implies 3$]
  If the intersection of two cycles contains more than one vertex then it either contains an edge or is disconnected. If the intersection is disconnected then Proposition \ref{Proposition:CC} gives a stronger result so we may assume towards a contradiction there are two cycles with an intersection that is connected and contains at least two vertices. By contracting the endpoints of this connected region (that contains at least one edge) a $D_3$ minor is produced, contradicting condition $2$.
\end{proof}
\begin{proof}[$3 \implies 1$]
  Assume to a contradiction that there are connected sets $X$ and $Y$ such that no two edges will intersect every $X,Y$ path. This gives at least three edges with three $X,Y$ paths such that each path uses only one of the edges. As $X$ and $Y$ are connected any pair of these paths gives a cycle, so two pairs of paths give two cycles that share an edge.
\end{proof}

\begin{Proposition}

\label{3disjointarc}
If a graph $G$ contains a $D_3$ minor then it contains a subdivision of $D_3$.
\end{Proposition}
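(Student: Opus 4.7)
My plan is to upgrade the given $D_3$ minor into an honest subdivision by routing three internally disjoint paths inside the two branch sets to meet at a common pair of Steiner vertices. I will first unpack the minor into its constituent pieces, then use the connectedness of the branch sets to locate the Steiner points, and finally concatenate to produce the subdivision.

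First, from the hypothesis I extract the data of the $D_3$ minor: two disjoint connected vertex sets $V_1, V_2 \subseteq V(G)$, together with three internally disjoint $V_1$–$V_2$ paths $P_1, P_2, P_3$, each meeting $V_1 \cup V_2$ only at its endpoints, say $a_i \in V_1$ and $b_i \in V_2$. By choosing a minimal such minor I may assume the $a_i$'s are distinct and the $b_i$'s are distinct; if two of them coincided, two of the $P_i$'s would already share an endpoint and the subdivision could be read off more directly after rerouting through the branch sets.

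Next, I claim that in each branch set there is a single vertex from which three internally disjoint paths reach the three anchor points. Fix a spanning tree $T_1$ of $V_1$ and let $Q_{12}$ be the unique $T_1$–path from $a_1$ to $a_2$; the unique $T_1$–path from $a_3$ to $Q_{12}$ meets $Q_{12}$ at a single vertex $u_1$, and the three $T_1$–arcs from $u_1$ to $a_1, a_2, a_3$ are internally disjoint (a trivial arc is permitted if $u_1$ happens to coincide with some $a_i$, i.e.\ if the three anchors lie on a common $T_1$–path). Applying the same construction inside $V_2$ produces $u_2 \in V_2$ and three internally disjoint $T_2$–arcs to $b_1, b_2, b_3$. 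For each $i$, I then concatenate the $T_1$–arc from $u_1$ to $a_i$, the path $P_i$, and the $T_2$–arc from $b_i$ to $u_2$. The three resulting $u_1$–$u_2$ paths are pairwise internally disjoint piecewise: the $T_1$–arcs share only $u_1$; the $P_i$'s are internally disjoint with disjoint endpoints in $V_1 \cup V_2$; the $T_2$–arcs share only $u_2$; and the three segments of each concatenated path live in the pairwise disjoint regions $T_1$, $\mathrm{int}(P_i)$, $T_2$. This is precisely a subdivision of $D_3$.

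The only real obstacle is the bookkeeping around degeneracy: I must ensure that reducing to distinct $a_i$'s and $b_i$'s does not destroy the minor, and I must handle the case $u_1 \in \{a_1, a_2, a_3\}$ (or the analogous case for $u_2$) so that each concatenation is a genuine path rather than a walk. Both reduce to standard tree manipulations and do not affect the main idea.
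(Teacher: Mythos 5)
Your proof is correct, but it takes a genuinely different route from the paper's. The paper's argument is a two-step shortcut: it first invokes Proposition~\ref{Proposition:Cactus} to translate ``$G$ has a $D_3$ minor'' into ``$G$ has two distinct cycles meeting in more than one vertex,'' and then reads a theta subgraph directly off of $C_1 \cup C_2$ by taking an arc $S$ of $C_1$ with only its endpoints on $C_2$. Your argument, by contrast, works from first principles with the minor model itself: extract the two branch sets and the three connecting paths, locate a tree-median (Steiner) vertex $u_i$ in a spanning tree of each branch set so that three internally disjoint arcs fan out to the three anchors, and concatenate. This is precisely the standard proof that, for a pattern graph of maximum degree at most~$3$, ``minor'' and ``topological minor'' coincide, specialized to $D_3$. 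It has the advantage of being self-contained (no reliance on the cactus characterization, which in the paper is established \emph{before} this proposition and whose proof of $2 \Rightarrow 3$ itself leans on Proposition~\ref{Proposition:CC}); it has the mild disadvantage of requiring more bookkeeping.

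One small inaccuracy worth flagging: the claim that a \emph{minimal} $D_3$ minor lets you assume the anchors $a_1, a_2, a_3$ (resp.\ $b_1, b_2, b_3$) are pairwise distinct is not actually true --- if $V_1$ is a single vertex $v$ sending three parallel edges into $V_2$, all three $a_i$ equal $v$ and no choice of minor model avoids it. But this does not damage the argument, because the median construction handles the degeneracy automatically: when $a_i = a_j$, the tree median $u_1$ becomes $a_i$ itself, the corresponding $T_1$-arcs degenerate to trivial paths, and the concatenated $u_1$--$u_2$ routes remain internally disjoint. So the appeal to minimality can simply be deleted; the rest of the proof stands.
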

\begin{proof}\label{Lemma 100} 

By Proposition \ref{Proposition:Cactus} a $D_3$ minor implies there are two distinct cycles that intersect at more than one vertex. let $C_1$ and $C_2$ be two such cycles. Cycle $C_1$ contains a path $S$ with only its endpoints in $C_2$. $S \cup C_2$ is a structure that can be produced by subdividing $D_3$.
\end{proof}

\subsection{Cut-Cacti}
\begin{Remark}
  After trees and cactus there is the class of three edge bottlenecked graphs we call cut-cactus. The authors are, at the time of writing, unaware of any in-depth study of such graphs. 
\end{Remark}
\begin{Proposition}
\label{Proposition:CC}
  The following are equivalent
  \begin{enumerate}
    \item A graph is $3$-edge bottlenecked.
    \item The intersection of any two distinct cycles in $G$ is empty or connected.
    \item A graph does not contain $D_4$ as a minor.
  \end{enumerate}
\end{Proposition}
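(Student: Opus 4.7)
The plan is to handle $1 \iff 3$ quickly via Remark \ref{Remark:BN=Ladder}, noting that an $n$-ladder and a $D_n$ minor are literally the same structure (a pair of disjoint connected sets with $n$ pairwise internally-disjoint paths between them), so 3-edge bottleneckedness is equivalent to the absence of a $D_4$ minor. The substantive work is then $2 \iff 3$, which I would handle by two contrapositive arguments.

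For $3 \Rightarrow 2$, I would assume that $G$ contains two distinct cycles $C_1, C_2$ whose intersection has connected components $A_1, \ldots, A_k$ with $k \geq 2$, and construct a $D_4$ minor of $G$. Working inside the subgraph $C_1 \cup C_2$, the plan is to contract each $A_i$ to a single vertex $v_i$ (deleting the resulting loops from edges shared by $C_1$ and $C_2$ inside the $A_i$) and further contract each between-component arc of $C_1$ and $C_2$ to a single edge. The resulting multigraph $H$ on $\{v_1, \ldots, v_k\}$ is 4-regular and decomposes as the edge-disjoint union of two Hamiltonian cycles, one from $C_1$ and one from $C_2$. When $k = 2$ this $H$ already is $D_4$. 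When $k \geq 3$, I would take the cyclic order $v_1 v_2 \cdots v_k$ induced by $C_1$ and contract the $k-2$ edges $v_1 v_2, \ldots, v_{k-2} v_{k-1}$ of that cycle, collapsing $\{v_1, \ldots, v_{k-1}\}$ to a single vertex $W$; the vertex $v_k$ then has exactly four edges to $W$ (the two $C_1$-edges $v_{k-1}v_k, v_k v_1$ and the two $C_2$-edges incident to $v_k$), producing $D_4$ as a further minor.

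For $2 \Rightarrow 3$, I would assume $G$ has a $D_4$ minor with disjoint connected branch sets $U, V$ and four pairwise internally-disjoint paths $P_1, \ldots, P_4$ from $U$ to $V$, with $P_i$ having endpoints $u_i \in U$ and $v_i \in V$. For each of the three possible pairings of $\{1,2,3,4\}$ into two pairs, the plan is to close the paired $P_i$'s using chosen paths in $U$ and in $V$ to form two cycles $C_1, C_2$. The key combinatorial lemma is that for any four (possibly coincident) marked vertices in a connected graph, at least two of the three pairings admit path choices sharing an internal vertex; this follows from a small case analysis on the Steiner topology of the four marked points in a spanning tree (a path, a star-like Y, or an H), supplemented by the degenerate cases where marked vertices coincide. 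Applying this lemma to both $U$ and $V$ and using a pigeonhole argument on the three pairings, at least one pairing is good for both $U$ and $V$; the resulting $C_1 \cap C_2$ then contains at least one vertex in $U$ and one in $V$, and since every edge of $C_1 \cap C_2$ lies entirely inside $U$ or entirely inside $V$, these vertices lie in two distinct components, yielding the required disconnected intersection.

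The hard part will be the tree-path lemma underlying $2 \Rightarrow 3$, along with the case bookkeeping for coincident endpoints (where some closing paths degenerate to single vertices). These degeneracies actually help rather than hurt: a coincident pair of endpoints in $U$ automatically contributes a shared vertex to any pairing that separates them, which only increases the count of pairings with intersecting $U$-paths above the threshold of two needed for the pigeonhole step.
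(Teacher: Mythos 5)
Your plan is correct, and for the substantive equivalence $2\iff 3$ it takes a genuinely different route from the paper. For $\neg 2 \Rightarrow \neg 3$, your argument (contract each component $A_i$ of $C_1\cap C_2$ and the between-component arcs, observe the resulting $4$-regular multigraph decomposes into two edge-disjoint Hamiltonian cycles, then contract further to $D_4$) is more machinery than the paper uses: the paper's version (its $1\Rightarrow 2$) just takes $X$ to be one component of $C_1\cap C_2$, sets $Y$ to be the rest of $C_1\cup C_2$ beyond the four incident arcs $r_1,\dots,r_4$, and reads off a $4$-ladder. For $\neg 3 \Rightarrow \neg 2$, the paper leans on two prior propositions (Proposition~\ref{Proposition:Cactus} and Proposition~\ref{3disjointarc}): from a $D_4$ minor extract a $D_3$ subdivision and then route the leftover fourth branch to exhibit two cycles with disconnected intersection. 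Your approach instead builds the two cycles directly from pairings of the four $U,V$ branch paths, using a combinatorial lemma (in a connected graph, at least two of the three pairings of any four marked vertices admit connecting paths that intersect) plus a pigeonhole step to find one pairing simultaneously good in $U$ and in $V$, then observes that $C_1\cap C_2\subseteq U\cup V$ forces the $U$-vertex and $V$-vertex to lie in distinct components. That lemma is true---a spanning-tree Steiner-topology case check over the path, Y, H, and star shapes together with the coincident-vertex degeneracies confirms it---but you state it without proof and yourself flag it as the hard part, so the case analysis needs to be written out carefully, including the degenerate cases where some $u_i$ coincide or a marked vertex is the Steiner branch point, and you should also note explicitly that $C_1\neq C_2$ because each contains a branch path disjoint from the other's edge set. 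Your route has the merit of being self-contained (it does not invoke the $D_3$-subdivision proposition) and it localizes the two components of $C_1\cap C_2$ inside $U$ and $V$, which makes the disconnection transparent; the paper's route is shorter because it reuses earlier results.
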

\begin{proof}[$1\implies 2$]
  Let $G$ be a $3$-edge bottlenecked graph with cycles $C_1$ and $C_2$. Assume to a contradiction that $C_1\cap C_2$ is nonempty and not connected. This gives that there are points $x,y\in C_1\cap C_2$ such that no $x,y$ path is contained in $C_1\cap C_2$. Consider the set $X$ to be the connected component of $C_1\cap C_2$ containing $x$. Take the sets $r_1,r_2,r_3,r_4$ to be the segments of $C_1\cup C_2\setminus C_1\cap C_2$ adjoined to $X$. Then take the set $Y$ to be all of $C_1\cup C_2$ aside from $X$ and the $r_i$ sets. The $r_i$ segments give 4 internally disjoint $X,Y$ paths, contradicting condition $1$.
\end{proof}
\begin{proof}[$2\implies 3$]
Let $G$ be a graph where the intersection of any two distinct cycles is empty or connected and assume to a contradiction that $G$ contains a $D_4$ minor. By considering 3 of the edges in this $D_4$ minor there is a $D_3$ minor, and by proposition \ref{3disjointarc} a $D_3$ subdivision. By considering the shortest paths between the vertices of this $D_3$ subdivision that use the remaining edge of the $D_4$ minor, there are two cycles with a disconnected intersection and this is a contradiction.
\end{proof}

\begin{proof}[$3\implies 1$]
If a graph is not $3$-edge bottlenecked then there are sets $X$ and $Y$ with at least 4 paths that share no edges between them. By contracting each path to an edge and the sets $X$ and $Y$ to vertices a $D_4$ minor is produced, contradicting condition $3$.
\end{proof}

\begin{Proposition}
\label{Lemma: 101}
  If a graph contains a $4$-ladder then it contains a $4$-ladder where the poles are paths, and the connections of two rungs occur at each of the endpoints of each path.
\end{Proposition}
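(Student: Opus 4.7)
The plan is to pass from an arbitrary 4-ladder to a minimal one by replacing each pole with a Steiner-minimal tree, and then exploit the tree topology to re-route the rungs so that they pair up at two terminal vertices. Starting from a 4-ladder $(X,Y,P_1,P_2,P_3,P_4)$ in $G$ with rung-endpoints $x_i\in X$ and $y_i\in Y$, I would first fix a spanning tree of $X$ and restrict it to the minimal subtree $T_X$ containing $\{x_1,x_2,x_3,x_4\}$; by minimality, every leaf of $T_X$ lies in $\{x_1,x_2,x_3,x_4\}$, so $T_X$ has at most four leaves. Apply the same reduction on the $Y$-side to get $T_Y$. Replacing $X$ by $T_X$ and $Y$ by $T_Y$ still leaves a 4-ladder, since the original rungs $P_i$ are unchanged and their interiors remain outside $T_X\cup T_Y$.

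Next, I would split into cases based on the topology of $T_X$. When $T_X$ is itself a path, two of the $x_i$ are its endpoints and the other two lie in its interior, say in order $x_1,x_3,x_4,x_2$; take the new pole $\pi_X=[x_3,x_4]$ and prepend the end-segments $[x_1,x_3]$ and $[x_2,x_4]$ to $P_1$ and $P_2$ respectively, yielding two new rungs at each endpoint of $\pi_X$. When $T_X$ has two degree-three Steiner vertices $s_1,s_2$ with two of the $x_i$ attached as leaves at each (the H-configuration), take $\pi_X=[s_1,s_2]$ and extend each $P_i$ by the short leaf-leg from $x_i$ to the adjacent Steiner vertex. The intermediate ``spider'' case (one branch vertex, three leaves, one $x_i$ interior to a leg) reduces to one of the above by taking $\pi_X$ to run from the interior $x_i$ to the branch vertex and pairing the three leaves accordingly.

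In each case the extensions used by different rungs lie on disjoint branches of $T_X\setminus\pi_X$, so the extended rungs $P'_i$ remain pairwise disjoint outside $\pi_X\cup\pi_Y$, and each $P'_i$ meets $\pi_X$ only at the endpoint where its extension terminates. Performing the symmetric construction on the $Y$-side then produces the desired 4-ladder $(\pi_X,\pi_Y,P'_1,P'_2,P'_3,P'_4)$ with path poles and two rungs meeting at each of the four endpoints.

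The case I expect to be the main obstacle is the ``star'' subcase, in which $T_X$ consists of a single Steiner vertex $v$ of degree four with the four $x_i$ as its leaves: the natural choice of $\pi_X$ then collapses to the single vertex $v$. I would resolve this by exploiting the fact that $X$ itself is a connected subgraph and is generally richer than the chosen spanning tree. If any cycle of $X$ passes through the branching region, an alternate spanning tree yields a Steiner subtree of H-type rather than star-type, separating the four legs into two pairs at two distinct ``split'' vertices and reducing the situation to the already-handled H-case; otherwise $\pi_X$ is allowed to be a trivial one-vertex path, with the four rungs all terminating at its sole endpoint. The same reasoning on $T_Y$ completes the proof.
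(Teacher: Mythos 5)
Your approach is genuinely different from the paper's. The paper deduces from Proposition~\ref{Proposition:CC} that a $4$-ladder forces two cycles with a disconnected intersection, then takes one component of that intersection to be a pole and the four arcs leaving it along the two cycles to be the rungs, extending along one of the cycles to form the second pole. You instead work directly with the given ladder, replace each pole by a Steiner subtree on the four rung-attachment points, and case on the topology of that tree (path, spider, $H$, star). Both constructions are legitimate; yours avoids appealing to the cycle characterization at the cost of a case analysis, while the paper's is shorter but relies on the earlier proposition and is itself quite terse about how the second pole is assembled when the cycle intersection has more than two components.

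That said, your treatment of the degenerate cases is the weak point. First, your first resolution of the star case -- that whenever $X$ has a cycle through the branch region one can re-choose the spanning tree to get an $H$-shaped Steiner tree -- is asserted rather than proved, and it is not needed anyway. Second, your case list is incomplete: the one-branch-vertex ("spider") case also includes the subcase where the branch vertex itself equals one of the $x_i$, which collapses $\pi_X$ to a single vertex just as the star does; you only discuss the subcase with an $x_i$ interior to a leg. Third, and most substantively, your fallback is to declare $\pi_X$ a one-vertex path with all four rungs meeting there. This is in fact unavoidable -- a graph consisting of four internally disjoint $v$--$w$ paths admits \emph{no} $4$-ladder with a nondegenerate path pole having exactly two rungs at each endpoint -- so the proposition must be read as permitting degenerate poles. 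But since the statement explicitly says ``the connections of two rungs occur at each of the endpoints,'' you should say outright that a one-vertex pole is interpreted as a path with coinciding endpoints, and justify why this satisfies the conclusion, rather than leave it as ``is allowed.'' With those points cleaned up, your argument would be a complete alternative proof.
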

\begin{proof}
Let $G$ be a graph that contains a $4$-ladder. There are two cycles in $G$ that have an intersection that is not connected. Let one connected region of this intersection be pole. The cycles give four paths that leave the first pole and eventually connect again at another region of the intersection. Take these segments to be the rungs and tale a path connecting their endpoints to be the other pole. This forms the required $4$-ladder structure.
\end{proof}

\subsection{$n$-edge bottlenecking}

\begin{Theorem} 
\label{BNMinorClosed}
The following are equivalent:
\begin{enumerate}
  \item A graph $G$ is $n$-edge bottlenecked.
  \item Every minor of $G$ is $n$-edge bottlenecked.
  \item $G$ does not contain $D_{n+1}$ as a minor.
\end{enumerate}
\end{Theorem}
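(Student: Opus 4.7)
The plan is to prove the cycle of implications $(2) \Rightarrow (1) \Rightarrow (3) \Rightarrow (2)$, with the real content concentrated in the equivalence $(1) \Leftrightarrow (3)$, which directly generalizes the arguments used for the cases $n = 2, 3$ in Propositions \ref{Proposition:Cactus} and \ref{Proposition:CC}. The implication $(2) \Rightarrow (1)$ is immediate, since $G$ is itself a minor of $G$.

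For $(1) \Rightarrow (3)$, I would argue the contrapositive. Suppose $G$ contains $D_{n+1}$ as a minor, with disjoint connected vertex branch sets $V_1, V_2$ and disjoint connected edge branch sets $E_1, \ldots, E_{n+1}$. Setting $X = V_1$ and $Y = V_2$, each $E_i$ can be completed to an $X,Y$ path by prepending one edge from $V_1$ and appending one edge from $V_2$; since the $E_i$ are pairwise disjoint, these $n+1$ paths are internally vertex-disjoint. Any set $S \subseteq E(G)$ of size $n$ can then meet at most $n$ of them, so at least one $X,Y$ path avoids $S$, witnessing that $G$ is not $n$-edge bottlenecked.

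For $(3) \Rightarrow (2)$, I would first derive $(3) \Rightarrow (1)$, again by contrapositive. If $G$ is not $n$-edge bottlenecked, then by Remark \ref{Remark:BN=Ladder} it contains an $(n+1)$-ladder; contracting each pole to a single vertex and each of the $n+1$ internally disjoint rungs to a single edge exhibits $D_{n+1}$ as a minor of $G$, contradicting $(3)$. The full implication $(3) \Rightarrow (2)$ then follows from transitivity of the minor relation: any minor of a minor of $G$ is itself a minor of $G$, so if $G$ contains no $D_{n+1}$ minor then no minor $H$ of $G$ does either, and applying $(3) \Rightarrow (1)$ to $H$ shows that every such $H$ is $n$-edge bottlenecked.

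The step requiring the most care is the translation between the $D_{n+1}$ minor structure and an $(n+1)$-ladder: in the forward direction I need that the edge branch sets really do yield internally vertex-disjoint $X,Y$ paths (not merely edge-disjoint ones), and in the reverse direction that the poles together with the rungs contract to $D_{n+1}$ without collapsing any two rungs onto the same edge. Both checks are essentially verifications against the definitions, so once they are spelled out explicitly the whole argument reduces cleanly to Remark \ref{Remark:BN=Ladder} and the transitivity of the minor relation.
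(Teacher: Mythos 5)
Your proposal is correct and follows essentially the same approach as the paper: both hinge on the equivalence between an $(n+1)$-ladder and a $D_{n+1}$ minor (via Remark \ref{Remark:BN=Ladder}, i.e.\ Menger's theorem) and on the fact that $G$ is a minor of itself. The only cosmetic difference is bookkeeping: you organize the argument as a cycle $(2)\Rightarrow(1)\Rightarrow(3)\Rightarrow(2)$ and invoke transitivity of the minor relation explicitly to pass from $(3)\Rightarrow(1)$ to $(3)\Rightarrow(2)$, whereas the paper proves $(1)\Leftrightarrow(2)$ and $(2)\Leftrightarrow(3)$ separately, folding the same minor-of-a-minor observation into the $(1)\Rightarrow(2)$ step; the mathematical content is identical.
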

  \begin{proof}[$1 \iff 2$]
    The backwards direction is straight-forward as $G$ is a minor of itself. So assume to a contradiction that $G$ is $n$-edge bottlenecked for some $n\in\mathbb{N}$ and that there is some minor $H$ of $G$ that is not $n$-bottlenecked. This gives that there are $X,Y\subset V(H)$ such that there are at least $(n+1)$-edge independent $X,Y$ paths. By considering the sets in $G$ that contract to form $X,Y$ and these paths one can construct an $(n+1)$-ladder in $G$. This would imply that $G$ is not $n$-edge bottlenecked, a contradiction.
    \end{proof}
  \begin{proof}[$1 \iff 3$]
  If $G$ contains $D_{n+1}$ as a minor it is clear that $G$ is not $n$-edge bottlenecked. So let $G$ be a graph that does not contain a $D_{n+1}$ minor and assume to a contradiction that $G$ is not $n$-bottlenecked. As $G$ is not $n$-edge bottlenecked there are $X,Y\subset V(G)$ with $(n+1)$-edge independent paths between them. By picking the first edge on each path that has an endpoint not in $X$ and contracting all of $X,Y$ allong with the rest of these paths to points, this gives a $D_{n+1}$ minor. 
  \end{proof}

 \begin{Remark}
 Theorem \ref{BNMinorClosed} shows that the class of $n$-bottlenecked graphs is a minor closed family classified by the $D_{n+1}$ excluded minor.   
 \end{Remark}
 \begin{Remark}
     Propositions \ref{Proposition:Cactus} and \ref{Proposition:CC} provide a geometric viewpoint to help understand the way cycles interact in $2$ and $3$-edge bottlenecked graphs.  It would be  interesting to extend this geometric characterization across the whole spectrum of edge bottlenecked graphs.
 \end{Remark}

\section{Coarse Bottlenecking \& A
Coarse Menger-type Theorem}
\label{Section:CMC'}
\begin{Remark}
  Remark \ref{Remark:BN=Ladder} Gives ladders as a useful tool for studying edge bottlenecking. A $2M$-fat $(n+1)$-ladder is not $M$-fat $n$-bottlenecked. The converse of this statement would be a slightly different version of the coarse Menger Conjecture than previously proposed \cite{GP23,CDNRV2010} and disproven by a counterexample \cite{STR23}. Note that the counter example given does not apply in this case as it contains a fat $3$-ladder. We present one such statement below.
\end{Remark}
\begin{Theorem}
\label{CBN_FatMinorClosed}
  Consider the following conditions:
  \begin{enumerate}
      \item A graph $G$ is $M$-fat $n$-bottlenecked. 
       \item Every $2M$-fat minor of $G$ is $n$-edge bottlenecked. 
      \item $G$ does not contain a $2M$-fat $D_{n+1}$ minor.
      \item $G$ does not contain a $2M$-fat $n+1$-ladder.
  \end{enumerate}
    The implications $1\implies 2\iff 3\iff 4$ hold. Furthermore if the graph is coarsely bottlenecked then $4\implies 1$.  
\end{Theorem}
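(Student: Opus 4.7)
\medskip

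\noindent\textbf{Proof plan.} My plan is to establish the chain of equivalences $3\iff 4\iff 5$ among minor-exclusion conditions, then the equivalence $1\iff 2$ together with the implication $2\implies 3$, and finally the coarse Menger-type direction $5\implies 1$. The equivalence $3\iff 5$ is by inspection since an $M$-fat $D_{n+1}$ minor and an $M$-fat $(n+1)$-ladder encode the same combinatorial data. For $3\iff 4$, I invoke Theorem \ref{BNMinorClosed} (a graph is $n$-bottlenecked iff it has no $D_{n+1}$ minor) together with a minor-composition argument: given an $M$-fat minor $H$ of $G$ failing $n$-bottleneckedness, the $D_{n+1}$ minor of $H$ composes with the $M$-fat branch-set structure of $H$ in $G$ to give a $D_{n+1}$ minor of $G$ whose $M$-fatness follows because non-adjacent composed branch sets come from non-adjacent $H$-vertices and hence are $M$-disjoint in $G$.

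For $1\iff 2$, the direction $2\implies 1$ is immediate since $G$ is a minor of itself. For $1\implies 2$ I reason contrapositively: a minor $H$ realized via a subgraph $G'\subset G$ with branch sets $\{V_u\}_{u\in V(H)}$, failing $M$-fat $n$-bottlenecking via witnesses $X',Y'\subset V(H)$, lifts to $X=\bigcup_{u\in X'}V_u$, $Y=\bigcup_{v\in Y'}V_v$ in $G$ along with a family of failing paths. The delicate step is reconciling distances, since contraction may shrink them; I intend to route around this by observing $d_H(u,v)=d_{G'}(V_u,V_v)$ and arguing through the subgraph $G'$ before descending to $G$. For $2\implies 3$, if $G$ contains an $M$-fat $D_{n+1}$ minor with branch sets $V_1,V_2,E_1,\ldots,E_{n+1}$, I take the minor of $G$ given by the subgraph $G'=V_1\cup V_2\cup \bigcup_i E_i$; in $G'$ distances only grow, so the rungs remain pairwise $M$-disjoint, and no set of $n$ vertices can have its $M$-neighborhood reach across all $n+1$ rungs, violating condition 2.

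The main content is $5\implies 1$ under the coarsely bottlenecked hypothesis, which I expect to be the main obstacle. Assume $G$ is $M'$-fat $k$-bottlenecked for some $M',k$ and contains no $M$-fat $(n+1)$-ladder. Given connected $M$-disjoint $X,Y$, let $P_1,\ldots,P_m$ be a maximum collection of pairwise $M$-disjoint $X,Y$ paths; condition 5 forces $m\leq n$, and by maximality every $X,Y$ path meets $\bigcup_i N_M(P_i)$. The task is to reduce this path-thickened bottleneck to the $M$-neighborhood of $n$ vertices. My planned strategy is to use the coarse bottleneckedness hypothesis to argue that each $P_i$ admits a single witness vertex $s_i\in P_i$ whose $M$-neighborhood captures the obstruction $P_i$ provides, yielding a size-$m\leq n$ bottleneck. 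The main difficulty will be verifying that the extracted $s_i$ \textit{together} block every $X,Y$ path — not merely paths shadowing a single $P_i$ — and this is precisely where the coarsely bottlenecked hypothesis is needed globally rather than merely locally, since without a uniform bound on bottlenecking the union $\bigcup_i N_M(P_i)$ cannot be collapsed to an $M$-neighborhood of any finite vertex set.
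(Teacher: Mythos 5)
The implications $1\iff 2$, $2\implies 3\iff 4\iff 5$ in your plan match the paper's route reasonably well (the paper handles $1\iff 2$ and $3\iff 4$ via separate lemmas using Theorem \ref{BNMinorClosed} and minor composition), though one claimed identity is wrong: $d_H(u,v)=d_{G'}(V_u,V_v)$ does not hold in general, since branch sets can have arbitrarily large diameter, making $d_{G'}(V_u,V_v)$ much larger than $d_H(u,v)$. You only get $d_{G'}(V_u,V_v)\ge d_H(u,v)$.

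The serious gap is in $5\implies 1$, which you yourself flag as unresolved. Your plan — take a maximum family $P_1,\dots,P_m$ of pairwise $M$-disjoint $X,Y$ paths, conclude $m\le n$, then ``collapse'' each $N_M(P_i)$ to an $M$-ball around a single witness $s_i\in P_i$ — is not a proof, and I don't see how the coarse bottleneckedness hypothesis closes it in the form you describe: the hypothesis gives $k$ vertices whose $m'$-neighborhoods block, for some unrelated constants $k$ and $m'$, and there is no mechanism in your sketch for aligning that blocking set with the $P_i$, nor for controlling the fact that other $X,Y$ paths may meet $P_i$ at points arbitrarily far from any chosen $s_i$. The paper takes an entirely different route: it proves Theorem \ref{Theorem:CMT} directly in contrapositive form. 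Assuming $G$ is $m$-fat $(n+1)$-bottlenecked but \emph{not} coarsely $n$-bottlenecked, for each target fatness $M$ one chooses $B>2(M+m+1)$, takes $B$-disjoint $X,Y$ that cannot be blocked by $n$ $B$-balls, and uses $(n+1)$-bottlenecking to get $n+1$ vertices $r_i$ whose $m$-neighborhoods block. The quantitative punchline is that the $r_i$ must be pairwise at distance $>2(B-m)$ (otherwise a single $B$-ball would absorb two of the $N_m(r_i)$, contradicting the ``no $n$ $B$-balls'' assumption), which forces $N_{m+M+1}(R)$ to split into $n+1$ pairwise $M$-disjoint components; these become the rungs, while the poles $X_P,Y_P$ are built from $X$, $Y$ together with the components of $G\setminus N_{m+M}(R)$ meeting them. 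Then $5\implies 1$ is read off from Theorem \ref{Theorem:CMT}. Your sketch has none of this scale-separation mechanism and stops exactly where the work begins, so as written it does not establish the crucial implication.
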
 
\begin{proof}[Proof of Theorem \ref{CBN_FatMinorClosed}]

The implication $3\iff 4$ follows directly from the definitions of a $M$-fat ladder and a $M$-fat minor.
The implication $4\implies 1$ holds for coarsely bottlenecked graphs as a consequence of Theorem \ref{Theorem:CMT}. The implications $2\iff 3$ and $1\implies 4$ are shown in Lemma \ref{Lemma:2=3} and Lemma \ref{Lemma:1=>4} respectively.
\end{proof}
\begin{Lemma}[$2\iff 3$]\label{Lemma:2=3}
Every $2M$-fat minor of $G$ is $n$-bottlenecked iff $G$ does not contain a $2M$-fat $D_{n+1}$ minor.
\end{Lemma}
\begin{proof}
    Let $G$ be a graph such that every $2M$-fat minor of $G$ is $n$-bottlenecked. As $D_{n+1}$ is not $n$-bottlenecked $G$ does not contain a $2M$-fat $D_{n+1}$ minor. 

    Let $G$ be a graph that does not contain a $2M$-fat $D_{n+1}$ minor. Let $H$ be a $2M$-fat minor of $G$,  every minor of $H$ is a $2M$-fat minor of $G$. Therefore $H$ must not contain a $D_{n+1}$ minor and so $H$ must be $n$-bottlenecked.
\end{proof}

\begin{Lemma}[$1\implies 4$]\label{Lemma:1=>4}
   If a graph $G$ contains a $2M$-fat $(n+1)$-ladder then $G$ is not $M$-fat $n$-bottlenecked.
\end{Lemma}
\begin{proof}
     Let $G$ be a graph that contains a $2M$-fat $(n+1)$-ladder. The poles give sets $X$ and $Y$ with $n+1$ distinct paths between them such that any two paths are $2M$-disjoint. As the paths are at least $2M$-disjoint, any $M$-neighborhood around a vertex will intersect at most one path. Therefore any set $S\subset V(G)\setminus (V(X)\cup V(Y))$ such that $N_M(S)$ intersects every path must have $|S|>n$ showing that $G$ is not $M$-fat $n$-bottlenecked. 
\end{proof}

The analog in a non-coarse setting of the implication $4\implies 1$ is given in Remark \ref{Remark:BN=Ladder} and follows as a direct result of Menger's Theorem \cite{Men27}. It has been shown \cite{STR23} that a coarse Menger's Theorem (as proposed by \cite{GP23,STR23}) does not hold. Bottlenecking is related to, but weaker than connectivity; so, there is reason to hope that a similar statement pertaining to coarse bottlenecking may be true. The existing counter example relies on large binary trees with connections across the lower layers, and as these structures get big they are not coarsely bottlenecked. See Example \ref{Example: counterexample} for more discussion on the differences between our coarse Menger-type Theorem and prior attempts at a Coarse Menger conjecture.

In a prior version of this paper \cite{V1}, the authors proposed a ``Coarse Menger-type Conjecture" to serve as the implication $4\implies 1$ for Theorem \ref{CBN_FatMinorClosed}. Under the assumption that the graph is coarsely bottlenecked, Theorem \ref{Theorem:CMT} proves our original conjecture. 

\begin{Theorem}[\textbf{A  coarse Menger-type Theorem}]
\label{Theorem:CMT}
    If a graph $G$ is coarsely $n+1$-bottlenecked but not coarsely $n$-bottlenecked, $G$ contains $D_{n+1}$ as an asymptotic minor.
\end{Theorem}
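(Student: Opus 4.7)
Fix the target scale $M$. The plan is to construct an $M$-fat $(n{+}1)$-ladder, which by Theorem~\ref{CBN_FatMinorClosed} is equivalent to an $M$-fat $D_{n+1}$ minor; since $M$ is arbitrary this yields $D_{n+1}$ as an asymptotic minor. Choose a large constant $M' \gg M + m$ whose exact value will be pinned down by the rerouting below. Since $G$ is not coarsely $n$-bottlenecked (hence in particular not $M'$-fat $n$-bottlenecked), there exist $M'$-disjoint connected subgraphs $X, Y \subset G$ such that for every $n$-vertex $S \subset V(G) \setminus (V(X)\cup V(Y))$ some $X, Y$ path avoids $N_{M'}(S)$. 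Because $X, Y$ are in particular $m$-disjoint, the $m$-fat $(n{+}1)$-bottleneck hypothesis supplies a set $T = \{t_1,\dots,t_{n+1}\}$ whose $m$-neighborhood meets every $X, Y$ path.

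For each $i$, apply the failure of $M'$-fat $n$-bottlenecking to $T_i := T \setminus \{t_i\}$ to obtain an $X, Y$ path $P_i$ avoiding $N_{M'}(T_i)$. Since $P_i$ meets $N_m(T)$ but avoids $N_m(T_i) \subseteq N_{M'}(T_i)$, it meets $N_m(t_i)$, and every vertex of $P_i$ lies at distance more than $M'$ from each $t_j$ with $j \neq i$. A short computation then shows that the restrictions $P_i \cap N_{M'/2}(t_i)$ and $P_j \cap N_{M'/2}(t_j)$ are mutually $(M'/2)$-disjoint for $i \neq j$, so near $T$ the $P_i$'s already behave like the rungs of an $M$-fat $(n{+}1)$-ladder with poles $X$ and $Y$.

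The main remaining obstacle is that two paths $P_i$ and $P_j$ may drift within distance $M$ of each other on the portions that leave $\bigcup_k N_{M'/2}(t_k)$. My plan is to eliminate such drifts by a Menger-style splicing exchange: whenever $x \in P_i$ and $y \in P_j$ satisfy $d(x, y) \leq M$, replace $P_i, P_j$ by the two paths $P_i[X\to x]\cdot[x,y]\cdot P_j[y\to Y]$ and $P_j[X\to y]\cdot[y,x]\cdot P_i[x\to Y]$. Because $d(x, t_k), d(y, t_k) > M'$ for every $k \notin \{i, j\}$ and the splice has length at most $M$, the new paths still avoid $N_{M'-M}(T \setminus \{t_i, t_j\})$, and with $M'$ chosen sufficiently large the original avoidance constraints for each path are preserved. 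Crucially, any exchange that would produce an $X, Y$ path entirely avoiding $N_m(T)$ is ruled out by the $m$-fat $(n{+}1)$-bottleneck hypothesis itself; this is precisely where this hypothesis enters the argument in a way that the general (disproved) coarse Menger conjecture does not.

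The technical core, and what I expect to be the main obstacle, is showing that the exchange process terminates and preserves the routing assignment, so that each final path still represents a rung through a distinct $N_m(t_i)$. Unlike classical Menger, the splice here does not decrease total length of the path system, so a subtler monovariant is needed, for instance the number of pairs of paths that contain points at distance $\leq M$ together with a lexicographic tiebreaker on the positions of such crossings. Making this case analysis rigorous, and verifying that the spliced configurations retain the property of being routed through all $n+1$ distinct bottleneck points rather than collapsing onto $n$ of them, is where most of the work will lie.
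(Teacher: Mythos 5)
Your setup mirrors the paper's up to the extraction of the set $T$ and the paths $P_i$, and you correctly deduce that the $t_i$ must be pairwise far apart. But you stop exactly where the content of the theorem lies. The splicing step you sketch is an attempt to make $P_1,\ldots,P_{n+1}$ pairwise $M$-disjoint by iterated exchange, and you have not produced a monovariant, a termination argument, or a guarantee that the rerouted paths continue to visit all $n+1$ distinct balls rather than collapsing onto $n$ of them; you acknowledge these holes yourself. There is also an uncontrolled parameter: $M'$ is supposed to be ``pinned down by the rerouting,'' but if the number of splices is unbounded the avoidance constant degrades below any fixed threshold. Given the known failure of the general coarse Menger conjecture, an exchange scheme whose only leverage is local separation near the $t_i$ should not be expected to produce $n+1$ globally $M$-disjoint $X,Y$ paths without a genuinely new idea.

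The paper avoids this entirely by never trying to disjointify paths. After establishing that the $r_i$ (your $t_i$) are pairwise far apart, it builds the $M$-fat ladder out of \emph{fat sets} rather than thin paths: the rungs are the $n+1$ components of $N_{m+M+1}(R)$, automatically pairwise $M$-disjoint because the $r_i$ are far apart, and the poles are $X$ (resp.\ $Y$) together with every component of $G\setminus N_{m+M}(R)$ meeting $X$ (resp.\ $Y$). Since the branch sets of a fat minor may be arbitrary connected blobs, any drift of $P_i$ toward $P_j$ outside the bottleneck region is harmless, because both paths are simply absorbed into a single pole there. That absorption is the step your route is missing, and it is what lets the paper's proof finish without any exchange argument.
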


\begin{proof}
    
    See Figure \ref{figure:CMT}. We will show $G$ contains $D_{n+1}$ as an asymptotic minor by constructing a $M$-fat $n+1$-ladder in $G$ for arbitrarily large $M$.

    See Figure \ref{figure:CMT}. Let $M>1$, as $G$ is not coarsely $n$-bottlenecked, for $B>2(M+m+1)$, there exist connected $B$-disjoint sets $X$ and $Y$ such that no set of $n$ vertices has a $B$-neighborhood that will intersect every $X,Y$ path. However, as $G$ is $m$-fat $n+1$-bottlenecked there is a collection of $n+1$ vertices $R:=r_1,r_2...r_{n+1}$ such that $N_m(R)$ intersects every $X,Y$ path. 
    
    Note that $d(r_i,r_j) > 2(B-m)$ for all $i\neq j$, else a $B$-neighborhood around the midpoint of $r_i,r_j$ would enclose $N_m(r_j)$ and $N_m(r_i)$, contradicting the result a $B$-neighborhood around $n$ vertices cannot intersect every $X,Y$ path. As the $r_i$ are each pairwise $2(B-m)$-disjoint, $m+2M+1$-neighborhoods around them are disjoint so $N_{m+M+1}(R)$ consists of $n+1$ pairwise $M$-disjoint components. These will form the rungs of our ladder.
    
    Next we will construct sets $X_P$ and $Y_P$ to form the poles. The set $X_P$ is the union of the set $X$ and each component of $G\setminus N_{m+M}(R)$ that contains a part of $X$. This set is connected as the set $X$ is connected. The set $Y_P$ is constructed similarly.
      
    As $X$ and $Y$ are disconnected in $G\setminus N_{m}(R)$, the components of $G\setminus N_{m+M}(R)$ that intersect $X$ are $2M$-disjoint from the components intersecting $Y$. Furthermore, as every $X,Y$ path intersects $N_{m}(R)$, the set $X$ is $M$-disjoint from components of $N_{m+M}(R)$ that intersect $Y$. A symmetric argument shows that the sets $X_P$ and $Y_P$ are $M$-disjoint.

    As the poles and rungs are $M$-disjoint, these sets give a $M$-fat $(n+1)$-ladder. Now the result follows from the result of Theorem \ref{CBN_FatMinorClosed} that shows the equivalence between fat ladders and fat $D_n$ minors.
\end{proof}

Menger's Theorem was first proven for finite graphs. In 2005 this was extended to infinite graphs by Aharoni and Berger \cite{AB09} who proved a conjecture of Erd\H{o}s (The Erd\H{o}s-Menger Conjecture). Menger's Theorem required the graph to be finite to ensure that the maximum number of disjoint paths between two sets was finite. This is similar to how we require the graph to be coarsely bottlenecked to ensure that the maximum number of coarsely disjoint paths is finite. We provide the following conjecture that would extend our Menger-type theorem in the same way as the Erd\H{o}s-Menger Theorem extended Menger's Theorem.

\begin{Conjecture}\textbf{(A coarse Erd\H{o}s-Menger-type Conjecture)}
\label{CEMC}
  If a graph is not coarsely bottlenecked, it must contain $D_n$ as an asymptotic minor for every $n\in \mathbb{N}$.
\end{Conjecture}

In the introduction we provide an extended formulation of this conjecture that more closely parallels the Erd\H{o}s Menger Theorem.

\begin{figure}[h]
    \centering
    \resizebox{5in}{!}{\includegraphics{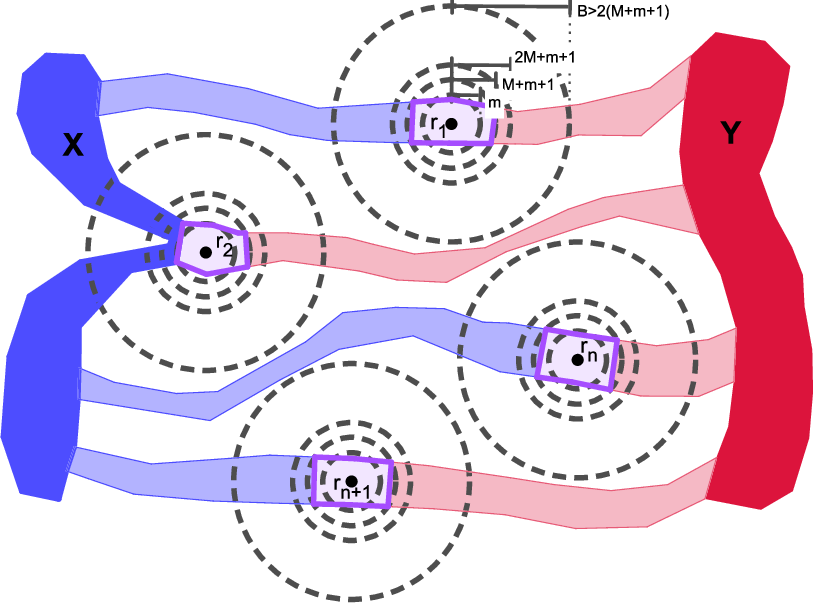}}
    \caption{\textbf{Picture proof of theorem \ref{Theorem:CMT}}  The set $X$ is on the left in dark blue, while the set $Y$,  on the right dark red. All $X,Y$ paths are in the other colored regions. Components of $G \setminus N_{m+M}(R)$ are light blue if they touch $X$ or light red if they touch $Y$. By  construction, these components are $M$-disjoint from $N_{m}(R)$. $X$ and $Y$ can enter an $m$-neighborhood, but never the same one nor cross through $N_{m}(R)$ on an $X,Y$ path, as this would lead to a contradiction. The poles $X_p$ (everything blue), and $Y_p$ (everything red) are $M$-disjoint as they are separated by $M$ long portions of the $X,Y$ paths between the $m$-neighborhoods and the $M+m$-neighborhoods. The $m+2M+1$-neighborhoods  for each vertex do not overlap as this would contradict $B> 2(M+m+1)$, so the rungs (in purple) are formed by the components of $N_{m+M+1}(R)$ and are $M$-disjoint from each other.}
    \label{figure:CMT}
\end{figure}

\begin{Remark}
  Theorem \ref{Theorem:CMT} shows that, for a coarsely bottlenecked graph. Coarse $n$-bottlenecking is equivalent to excluding $D_{n+1}$ as an asymptotic minor. It does this by allowing the construction of a $M$-fat $n'$-ladder for any $M$ and all $n'\leq n+1$.
 
 Together with Theorem \ref{CBN_FatMinorClosed}, this shows that the class of $M$-fat $n$-bottlenecked graphs is a minor closed family classified by the $M$-fat $D_{n+1}$ excluded minor. 
 \end{Remark}

\begin{Theorem}
\label{Theorem:CNB}
  Consider the conditions:
  \begin{enumerate}
  \item A graph $G$ is not coarsely $n$-bottlenecked.
  \item $G$ contains $D_{n+1}$ as an asymptotic minor.
  \item $G$ contains a $(n+1)$-ladder as an asymptotic minor.
\end{enumerate}
The implications $2\iff 3\implies 1$ hold. Furthermore if the graph is coarsely bottlenecked then $1\implies 2$.
\end{Theorem}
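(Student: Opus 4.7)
The plan is to dispatch the three required implications by leveraging the machinery already built. The equivalence $2 \iff 3$ should be immediate from the earlier Remark stating that an $(n+1)$-ladder is the same combinatorial object as a $D_{n+1}$ minor, with $M$-fat $(n+1)$-ladders corresponding exactly to $M$-fat $D_{n+1}$ minors. Taking the asymptotic version, i.e. existence of such $M$-fat structures for every $M$, preserves this correspondence verbatim, so neither direction requires additional work beyond unpacking definitions.

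For $3 \implies 1$ I would argue contrapositively via Theorem \ref{CBN_FatMinorClosed}. If $G$ has an $(n+1)$-ladder as an asymptotic minor then for every $M$ it contains an $M$-fat $(n+1)$-ladder, and the contrapositive of $1 \implies 5$ in Theorem \ref{CBN_FatMinorClosed} rules out $M$-fat $n$-bottlenecking at that scale; hence $G$ is not coarsely $n$-bottlenecked. For the conditional implication $1 \implies 2$ I would let $n^\ast$ be the smallest integer such that $G$ is coarsely $n^\ast$-bottlenecked; this is well-defined because $G$ is coarsely bottlenecked by hypothesis, and since $G$ fails coarse $n$-bottlenecking we have $n^\ast \geq n+1$. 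Choose $m$ with $G$ being $m$-fat $n^\ast$-bottlenecked; minimality of $n^\ast$ forces $G$ to not be coarsely $(n^\ast-1)$-bottlenecked. Theorem \ref{Theorem:CMT}, applied with its ``$n+1$'' taken to be $n^\ast$, then produces $D_{n^\ast}$ as an asymptotic minor of $G$. Since any $M$-fat $D_{n^\ast}$ minor restricts to an $M$-fat $D_{n+1}$ minor by deleting the surplus edge-branch sets, this descends to an asymptotic $D_{n+1}$ minor of $G$.

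The main obstacle is not in these routine reductions but in the absence of any analogue of Theorem \ref{Theorem:CMT} for graphs that are not coarsely bottlenecked: without that hypothesis one cannot choose an $n^\ast$ to pivot the argument on, and the inductive reduction to the smallest bottlenecking level collapses. Supplying this missing case is precisely the content of Conjecture \ref{CMC'}, so the present theorem is best read as the conditional statement that Conjecture \ref{CMC'} would upgrade to an unconditional Menger-type characterization of non-coarsely-bottlenecked graphs.
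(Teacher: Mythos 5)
Your proof is correct and follows essentially the same route as the paper, which simply cites Theorem \ref{CBN_FatMinorClosed} for $2 \iff 3 \implies 1$ and Theorem \ref{Theorem:CMT} for the conditional $1 \implies 2$. The one place you add genuine value is the $n^\ast$-pivot argument: the paper's invocation of Theorem \ref{Theorem:CMT} is terse and glosses over the fact that ``coarsely bottlenecked and not coarsely $n$-bottlenecked'' does not immediately supply the hypothesis ``$m$-fat $(n+1)$-bottlenecked'' that Theorem \ref{Theorem:CMT} demands; your choice of the minimal $n^\ast \geq n+1$ for which $G$ is coarsely $n^\ast$-bottlenecked, followed by the observation that an asymptotic $D_{n^\ast}$ minor restricts to an asymptotic $D_{n+1}$ minor, is exactly the missing bookkeeping needed to make that step rigorous.
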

\begin{proof}[Proof of Theorem \ref{Theorem:CNB}]
    The implications $2\iff 3\implies 1$ hold as a result of Theorem \ref{CBN_FatMinorClosed}. In coarsely bottlenecked graphs $1\implies 2$ as a result of Theorem \ref{Theorem:CMT}. In cases where $1\implies 3$ holds, this gives an excluded asymptotic minor characterization of coarsely bottlenecked graphs.
\end{proof}
In \cite{GP23} Georgakopoulos and Papasoglu proposed the following conjecture.
\begin{Conjecture}[\cite{GP23} Conjecture 1.1]
\label{GP1.1}
Let $X$ be a graph or a length space, and let $H$ be a finite
graph. Then $X$ has no $M$-fat $H$ minor for some $M \in \mathbb{N}$ if and only if $X$ is quasi-isometric to a graph with no $H$ minor. Furthermore, the constants of this quasi-isometry depend only on $M$ and $H$.
\end{Conjecture}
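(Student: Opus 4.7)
The plan is to split the biconditional into its two directions. For the easy direction, suppose $X$ is $(K,C)$-quasi-isometric to a graph $Y$ with no $H$ minor via a map $f \colon X \to Y$. I would set $M_0$ to be a sufficiently large function of $K$, $C$, and $|V(H)|$ and show that $X$ has no $M_0$-fat $H$ minor. Given hypothetical branch sets $\{B_v\}_{v \in V(H)}$ witnessing an $M_0$-fat $H$-minor in $X$, each image $f(B_v)$ can be thickened to a connected subgraph of $Y$ by joining images of adjacent points along bounded-length geodesics, and $M_0$-separation in $X$ forces genuine disjointness of the thickened images in $Y$ once $M_0$ exceeds the QI distortion. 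The edge-incidence structure of $H$ is preserved, yielding an $H$ minor in $Y$ and contradicting the hypothesis.

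The hard direction is where the substance lies. The approach I would pursue is a net-graph construction: fix $M$ as in the hypothesis, choose a maximal $\varepsilon$-separated net $N \subset X$ for $\varepsilon$ a small function of $M$, and form a graph $Y_N$ on vertex set $N$ with edges joining pairs at $X$-distance at most $\delta = \delta(M)$. Standard arguments show $Y_N$ is quasi-isometric to $X$ with constants depending only on $M$, which also settles the second clause of the conjecture. The remaining task is to show $Y_N$ has no $H$ minor. Arguing by contrapositive, from an $H$-minor of $Y_N$ with branch sets $\{A_v\}$ I would attempt to construct an $M$-fat $H$-minor in $X$ by thickening each $A_v$ into $B_v := N_r(A_v) \subset X$ for a suitable radius $r$, then verifying that $B_v$ is connected, that adjacent pairs admit an edge realizing the incidence, and that non-adjacent pairs remain $M$-separated.

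The main obstacle is precisely the last requirement: non-adjacent branch sets $A_u$ and $A_v$ in $Y_N$ need not be far apart in $X$, since they may simply fail to be joined by a net-edge while passing very close to one another, and naive thickening then merges the branch sets. The remedy I would pursue is a two-scale or hierarchical net: the net scale $\varepsilon$ controls the QI, while a much larger buffer scale forces non-adjacent branch sets to separate genuinely, with any near-merging collapsed by simplifying the underlying $H$-structure (exploiting minor-closure). This parallels the coarse skeleton constructions in \cite{M05}, and the delicate technical point is to show that the refinement terminates with a net graph that is simultaneously QI-equivalent to $X$ and faithfully records fatness of minors. Given the counterexample of \cite{CMCCounterEx} to the related Coarse Menger Conjecture, I expect that the full generality of the conjecture will require genuine hypotheses beyond mere length-space structure, and a natural first milestone is to establish it for coarsely bottlenecked $X$, where Theorem \ref{Theorem:CMT} resolves the case $H = D_n$ directly and induction on $|V(H)|+|E(H)|$ via edge-deletion and contraction in $H$ should handle the rest.
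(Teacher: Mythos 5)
There is a fundamental problem here that no amount of technical work will fix: the statement you are trying to prove is not a theorem of this paper. It is stated as a conjecture, attributed to Georgakopoulos and Papasoglu \cite{GP23}, and the paper offers no proof of it. Worse, the remark immediately following the statement records that Davies, Hickingbotham, Illingworth, and McCarty \cite{DHIM24} adapted the counterexample to the Coarse Menger Conjecture into a counterexample to this very conjecture. So the ``hard direction'' of the biconditional --- that absence of an $M$-fat $H$ minor forces $X$ to be quasi-isometric to an $H$-minor-free graph --- is false in general, and your proposal cannot be completed. Your easy direction (quasi-isometry to an $H$-minor-free graph rules out sufficiently fat $H$ minors) is standard and correct, but it is not the content of the conjecture.

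To your credit, you correctly identified the exact point of failure: in the net-graph construction, non-adjacent branch sets of an $H$-minor in $Y_N$ can pass arbitrarily close in $X$ without being joined by a net edge, so fatness of the lifted minor cannot be guaranteed. The counterexample of \cite{DHIM24} shows this obstacle is essential rather than technical; no two-scale or hierarchical refinement can evade it for arbitrary $X$ and $H$. Your closing instinct --- that the statement should be restricted to a special class such as coarsely bottlenecked graphs, with $H = D_n$ handled by Theorem \ref{Theorem:CMT} --- is in fact the direction the paper takes: it proposes the restricted Conjecture \ref{Conjectuer:QnBN} (which remains open for $n \geq 3$) rather than attempting to prove Conjecture \ref{GP1.1} itself. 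The correct response to this prompt would have been to observe that the statement is an (already refuted) conjecture, not to attempt a proof.
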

\begin{Remark}
  Recently  a counter example \cite{DHIM24} to the coarse Menger conjecture of \cite{GP23} and \cite{STR23} was adapted to a counterexample of Conjecture \ref{GP1.1}. In a similar spirit, we propose the following conjecture for the class of coarsely bottlenecked graphs.
\end{Remark}
\begin{Conjecture}[\textbf{Quasi $n$-Bottlenecking Conjecture}]
\label{Conjectuer:QnBN}
  If a graph $G$ is coarsely $n$-bottlenecked then it is quasi-isometric to a $n$-edge bottlenecked graph.
\end{Conjecture}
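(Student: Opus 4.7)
The plan is to exploit the $D_{n+1}$ excluded-minor characterization of $n$-edge bottlenecking (Theorem \ref{BNMinorClosed}), together with the fat-minor reformulation of coarse bottlenecking (Theorem \ref{CBN_FatMinorClosed}), to build a quasi-isometric model of $G$ that provably has no $D_{n+1}$ minor. Fix $M$ such that $G$ is $M$-fat $n$-bottlenecked, equivalently $G$ contains no $M$-fat $D_{n+1}$ minor. Pick a constant $C \gg M$ (to be tuned later), take a maximal $C$-separated net $V_0 \subseteq V(G)$, and form the auxiliary graph $G'$ on vertex set $V_0$ by joining $u \sim v$ whenever $d_G(u,v) \le 3C$. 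A standard argument shows $G$ and $G'$ are quasi-isometric with constants depending only on $C$, so it suffices to show $G'$ is $n$-edge bottlenecked, i.e.\ has no $D_{n+1}$ minor.

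Suppose toward a contradiction that $G'$ contains $D_{n+1}$ as a minor, witnessed by disjoint connected branch sets $X', Y' \subseteq V_0$ joined by $n+1$ internally disjoint paths $P'_1,\dots,P'_{n+1}$ in $G'$. Pull each object back to $G$: replace $X'$ and $Y'$ by the unions $X,Y$ of the $C/4$-balls around their net points, and replace each $G'$-edge inside a $P'_i$ by a $G$-geodesic between its endpoints, concatenating to obtain a path $P_i$ in $G$. The target is to exhibit $X,Y,P_1,\dots,P_{n+1}$ as an $M$-fat $(n+1)$-ladder in $G$, contradicting condition $5$ of Theorem \ref{CBN_FatMinorClosed}. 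Independently, the base case $n=1$ should fall out immediately from this setup: a $D_2$ minor of $G'$ would lift to two parallel paths between $C/4$-thickened net-connected regions, which is already an $M$-fat $2$-ladder once $C$ exceeds a small multiple of $M$, forcing $G'$ to be acyclic and hence $G$ to be quasi-isometric to a tree.

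The main obstacle is pairwise $M$-disjointness of the lifted rungs and branch sets. Net separation gives distance control between net points on different paths, but the geodesic arcs between consecutive net points can wander up to $3C$ away and two such arcs from different rungs may accidentally come within $M$ of one another. To overcome this, the plan is a two-level rerouting: first enlarge $C$ so that the net-scale separation dominates $M$ comfortably; then, whenever two putative rungs cluster, use the $M$-fat $n$-bottlenecking of $G$ at the smaller scale to locally redirect one rung around a bottleneck neighborhood of the cluster, iterating until pairwise $M$-disjointness is achieved. Making this rerouting converge without introducing new clusters is the crux of the argument, and is precisely where the special dipole structure of $D_{n+1}$ (as opposed to a general excluded minor $H$ as in Conjecture \ref{GP1.1}) should help: any two rungs meet only inside $X'\cup Y'$, so the deconfliction is a purely pairwise problem and does not have to preserve the richer combinatorics present in a general $H$-minor.
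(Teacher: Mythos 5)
This statement is left as an open \emph{conjecture} in the paper; the authors do not prove it, and they only note that the cases $n=1$ and $n=2$ are known from Manning \cite{M05} (quasi-trees) and Fujiwara--Papasoglu \cite{FP23} (coarse cacti), respectively---both of which are substantial standalone theorems. Your proposal therefore should not be compared to ``the paper's proof,'' but rather assessed on whether it closes the open problem, and it does not.

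The genuine gap is exactly the one you flag yourself: the ``two-level rerouting'' step is not carried out, and it is not a technicality---it is the entire substance of the problem. Taking a $C$-separated net $V_0$, forming the Rips-type graph $G'$ at scale $3C$, and lifting a $D_{n+1}$ minor of $G'$ back to $G$ by replacing $G'$-edges with geodesics is the standard first move, and it is well understood that it fails to produce a fat ladder on its own: geodesic arcs belonging to distinct rungs in $G'$ can pass within bounded $G$-distance of each other no matter how large $C$ is chosen, because a path in $G'$ controls only the locations of net points, not the behavior of the connecting geodesics between them. Enlarging $C$ does not help, since the offending proximity is at scale $M$, not at net scale. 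Your proposed fix---``use the $M$-fat $n$-bottlenecking of $G$ at the smaller scale to locally redirect one rung around a bottleneck neighborhood''---does not follow from any result stated or cited here: $M$-fat bottlenecking asserts the existence of a small separating set, not the ability to reroute a path around a region while staying far from a second path, and the counterexample of \cite{CMCCounterEx} is precisely a warning that such rerouting intuitions can fail. You would need to prove a convergence lemma showing the iteration terminates and does not create new clusters, which is where the whole difficulty lives.

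Your claim that the $n=1$ case ``falls out immediately from this setup'' is also incorrect for the same reason: a $D_2$ minor of $G'$ (a cycle in $G'$) does not automatically lift to an $M$-fat $2$-ladder in $G$, because the two arcs of the cycle can come within $M$ of one another in $G$ after the geodesic replacement. Manning's theorem that graphs with the coarse bottleneck property at a single scale are quasi-trees is a genuine theorem with a nontrivial proof; it does not drop out of the net construction for free. If your framework really made $n=1$ immediate, it would already be a noteworthy simplification of \cite{M05}, which should itself be a sign that a step is missing.
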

\begin{Remark}
  The Quasi-Bottlenecking conjecture \ref{Conjectuer:QnBN} holds in the case of $n=1$ \cite{M05,GP23} and $n=2$ \cite{FP23,STR23}.
  \end{Remark}
  \begin{Proposition}
      If our Coarse Menger-type Conjecture (\ref{CEMC}) holds, then conjecture \ref{Conjectuer:QnBN} is a special case of Conjecture 1.1 of Georgakopoulos and Papasoglu (Conjecture \ref{GP1.1} in this work).
  \end{Proposition}
  \begin{proof}
      Conjecture \ref{CEMC} states that a graph that is not coarsely bottlenecked will contain an asymptotic $D_n$ minor for all $n$. This, in combination with Theorem \ref{Theorem:CMT} would imply that coarse $n$-bottlenecking is characterized by an excluded asymptotic $D_{n+1}$ minor. In this case the assumption of Conjecture \ref{Conjectuer:QnBN} would be equivalent to the exclusion of an asymptotic $D_{n+1}$ minor, and this is exactly a special case of Conjecture \ref{GP1.1}.
  \end{proof}

\end{document}